\numberwithin{equation}{section}
\theoremstyle{plain}
\newtheorem{theorem}{Theorem}[section]
\newtheorem*{corollary*}{Corollary}
\newtheorem{proposition}[theorem]{Proposition}
\newtheorem*{conjecture*}{Conjecture}
\newtheorem{property}{Property}
\newtheorem{corollary}{Corollary}
\theoremstyle{definition}
\newtheorem{definition}[theorem]{Definition}
\theoremstyle{remark}
\newtheorem*{remark*}{Remark}
\newtheorem{case[theorem]}{Case}
\newtheorem*{question*}{Question}
\newtheoremstyle{named}{}{}{\itshape}{}{\bfseries}{.}{.5em}{\thmnote{#3 }#1}
\theoremstyle{named}
\newtheorem*{namedlemma}{Lemma}
\newtheorem*{namedtheorem}{Theorem}
\newtheoremstyle{author}{}{}{\itshape}{}{\bfseries}{.}{.5em}{\thmnote{#1 }(#3)}
\theoremstyle{author}
\newtheorem*{authortheorem}{Theorem}
\newcommand{\F}{\mathbb{F}}
\newcommand{\Z}{\mathbb{Z}}
\newcommand{\R}{\mathbb{R}}
\newcommand{\C}{\mathbb{C}}
\newcommand{\cS}{\mathcal{S}}
\newcommand{\cB}{\mathcal{B}}
\newcommand{\fpstar}{\mathbb{F}_p^*}
\newcommand{\eps}{\epsilon}
\def\supp{{\rm supp\,}}
\def\FF{\widehat}
\begin{document}

\title{A new bound for $A(A + A)$ for large sets}
\author{Aliaksei Semchankau}
\date{}
\maketitle

\begin{abstract}
\setlength\parindent{0pt}
For $p$ being a large prime number, and $A \subset \F_p$ we prove the following:
\begin{enumerate} 
\item If $A(A+A)$ does not cover all nonzero residues in $\F_p$, then $|A| < p/8 + o(p)$.
 \item If $A$ is both sum-free and satisfies $A = A^*$, then $|A| < p/9 + o(p)$. 
 \item If $|A| \gg \frac{\log\log{p}}{\sqrt{\log{p}}}p$, then
$|A + A^*| \geqslant (1 - o(1))\min(2\sqrt{|A|p}, p)$.
\end{enumerate}
Here the constants $1/8$, $1/9$, and $2$ are the best possible.
The proof involves \emph{wrappers}, subsets of a finite abelian group $G$, with which we `wrap' popular values in convolutions $A * B$ for dense sets $A, B \subseteq G$.
These objects carry some special structural features, making them capable of addressing both additive-combinatorial and enumerative problems.

\end{abstract}

\section{Introduction}

Szemeredi's Regularity Lemma \cite{szemeredi-regularity} is a key result in structural Graph Theory, having numerous applications in and outside the scope of Graph Theory. We quote it as in \cite{green-regularity}. \\
Let $G = (V, E)$ be a graph and let $A, B$ be disjoint subsets of $V$. Define the density
$d(A, B)$ to be the proportion of elements $(x, y) \in A \times B$ such that $xy \in E$. If $\eps > 0$, we say that a pair $(A, B)$ is $\eps$-uniform if
$|d(A', B') - d(A, B)| \leqslant \eps$
whenever $A' \subseteq A$ and $B' \subseteq B$ satisfy $|A'| > \eps|A|$ and $|B'| > \eps|B|$.

\begin{namedlemma}[Szemeredi's Regularity]
Let $\eps > 0$. There exists $M = M(\eps)$ such that the vertex set $V$ of any graph $G$ can be partitioned into $1/\eps < m < M$ sets $V_1, \ldots, V_m$ with sizes differing by at most $1$, such that at least $(1-\eps)m^2$ of the pairs $(V_i, V_j )$ are $\eps$-uniform.
\end{namedlemma}

This has the following structural corollary, history of which is discussed in \cite{green-regularity}:
\begin{namedlemma}[Triangle Removal]
Let $G$ be a graph on $n$ vertices, and suppose that $G$ contains $o(n^3)$ triangles. Then we may remove $o(n^2)$ edges from $G$ so as to leave a graph which is triangle-free.
\end{namedlemma}

In particular, this implies the Roth-Varnavides Theorem: if a subset $A \subseteq \{1, 2, \ldots, N\}$ has $o(N^2)$ arithmetic progressions of length $3$, then $|A| = o(N)$.\\
In 2005, Green demonstrated (see Theorem 1.5 in \cite{green-regularity} and also the further development of these ideas in \cite{green-tao-regularity}), that a similar structural result takes place in finite abelian groups, giving the following. Let $G$ be an abelian group of size $N$:
\begin{authortheorem}[Green, 2005]
    Let $k \geqslant 3$ be a fixed integer, and suppose that $A_1, A_2, \ldots, A_k$ are subsets of $G$ such that there are $o(N^{k-1})$ solutions to the equation $a_1 + \ldots + a_k = 0$. Then we may remove $o(N)$ elements from each $A_i$, so as to leave sets $A_i'$ such that there are no solutions to $a_1' + \ldots + a_k' = 0$ with $a_i' \in A_i'$ for all $i$.
\end{authortheorem}

This allowed answering in positive the following question of Bergelson, Host, and Kra:\\
{\it Question:}
Suppose that $\alpha, \eps > 0$. Is it true, that there is $N_0(\alpha, \eps)$ such that if $N > N_0(\alpha, \eps)$ and $A \subset \{1, 2, \ldots, N\}$ has size $\alpha N$, then there is some $d \neq 0$ such that $A$ has at least $(\alpha^3 - \eps)N$ three-term arithmetic progressions of length $d$?

In this paper we prove the following result, which resembles Triangle Removal Lemma and Green's Theorem:

\begin{namedlemma}[Main] 
\label{lemma:main}
Let $A_1, A_2, \ldots, A_k \subset \F_p$  be such that $|A_i| \gg p$ for all $i$. 
Assume that $(A_1 * A_2 * \ldots * A_k)(a) = o(p^{k-1})$ for some $a \in \F_p$.
Then there exist $Y_1, \ldots, Y_k, W_1, \ldots, W_k \subset \F_p$, such that:
\begin{enumerate}
    \item $(W_1 * W_2 * \ldots * W_k)(b) = o(p^{k-1})$ for some $b \in \F_p$,

    \item $A_i \setminus Y_i \subseteq W_i$ and $|Y_i| = o(p)$ for all $i$, 
    
    \item $|W_i|_{\omega} = p^{o(1)}$ for all $i $, where $|\cdot|_{\omega}$ is a Wiener norm. 
\end{enumerate}
\end{namedlemma}
{\it Remark:} Point (1) implies $|W_1| + |W_2| + \ldots + |W_k| < p + o(p)$, see Proposition \ref{prop:supersaturation}.\\
{\it Remark:} Point (3) implies $|(cW_i+e) \cap (dW_j+f)^*| = (1 + o(1))|W_i||W_j|/p$ for all $i, j$ and $c, d, e, f \in \F_p, cd \neq 0$, see Property \ref{property:wcapw}.

This result allows us to give the precise answers to the following questions, which we briefly mention here, see the history and the discussion in Section \ref{sec:results}:\\ 
{\it Question:} Let $A \subset \F_p$ and assume $A(A+A)$ does not cover all nonzero values, how large can $A$ be?\\ 
 {\it Question:}   What is the largest possible size of $A \subset \F_p$, such that it both sum-free and $A=A^*$?\\
 {\it Question:}   Let $A \subset \F_p$ be of size $\alpha p$, what is the smallest possible size of the set $A+A^*$?

As we have seen, our new structural Main Lemma allows us to obtain optimal (up to an $o(p)$-term) results, and we deduce it with the help of a new conception of \emph{wrappers} (see Section \ref{sec:wrappers}), subsets of an abelian group $G$, approximating level-sets of sumsets $A+B$, and, more generally, level-sets of any function $f$, having a small Wiener norm.
A wrapper comes with two parameters: $\eps \in (0, 1)$, which we call its \emph{granularity}, and positive integer $d$, which we call its \emph{dimension}. We write $\eps$-$d$-wrapper for short. Granularity and dimension of wrappers are exploited in Properties 2 and 3 in Section \ref{sec:discussion} below.\\
Szemer{\'e}di Regularity Lemma and Green's Theorem proofs rely on an energy increment argument, which gives tower-like dependencies, which makes results only applicable for graphs/sets of positive density (or, more precisely, for density of order at least $1/\log^{*}|G|$, where $G$ is a graph/group, and $\log^{*}$ is a number of times one has to logarithmize the argument to obtain a value below $2$).
However, at the cost of succinctness in the statement, we can prove the following quantitative result (at the further cost of symmetry, one can also obtain Technical Lemma below), which is applicable for sets of density $1/(\log|G|)^c$, $c > 0$. We defer both proofs of Main and Supplementary Lemma to Section \ref{sec:main_supp_proofs}.

\begin{namedlemma}[Supplementary]
\label{lem:supp}
Let $G$ be a finite abelian group, and let $\alpha, \beta \in (0, 1)$. Let $A, B \subset G$, and assume $|A| \geqslant \alpha|G|, |B| \geqslant \beta|G|$.
Let $\eta_1, \eta_2 \in \R, \delta, \xi \in \R^{+}$.
Let 
$$
X := \{x : \eta_1 |G| \leqslant (A * B)(x) \leqslant \eta_2 |G|\}, \  
X^{+} := \{x : (\eta_1 - \delta)|G| < (A * B)(x) < (\eta_2 + \delta)|G|\}.
$$
Then there exist wrapper $W$ of granularity $\eps = \eps(\alpha, \beta) \gg \min\big(\frac{\delta}{\sqrt{\alpha\beta}}, 1\big)$ and dimension $d = d(\alpha, \beta, \xi) \ll \max\big(\frac{\alpha \beta}{\delta^2}, 1\big) \log{\big(\frac{1}{\xi}\big)}$, and set $Y$, $|Y| \leqslant \xi p$, satisfying inclusions $X \setminus Y \subseteq W$ and $W \setminus Y \subseteq X^{+}$, which we call wrapping.

\end{namedlemma}

The rest of the paper is structured as follows. \\
In Section \ref{sec:notations} we introduce the basic notation and recall useful facts on Discrete Fourier Transform.\\ 
In Section \ref{sec:discussion} we provide some further discussion on wrappers.\\
In Section \ref{sec:results} we state Theorems \ref{thm:a(a+a)}, \ref{thm:a=a_inverse}, \ref{thm:a+a_inverse}\\
In Section \ref{sec:aux} we provide some auxiliary results on sumsets.\\
In Section \ref{sec:applications} we apply Main and Supplementary Lemmas to obtain the Theorems from \ref{sec:results}.\\
In Sections \ref{sec:wrappers} we introduce wrappers, which serve to prove Main and Supplementary Lemmas in Section \ref{sec:main_supp_proofs}.

\section{Acknowledgements}
This work was supported by the Russian Science Foundation under grant no.19-11-00001, \url{https://rscf.ru/en/project/19-11-00001/}. The author is very grateful to Ilya Shkredov for suggesting the problem on size of $A(A+A)$ as a project, and his constant attention to this work.

\section{Notation}
\label{sec:notations}

Throughout the paper, the standard notation $\ll, \gg$, and respectively $O$ and $\Omega$ is applied to positive quantities in the usual way. That is, $X \ll Y, Y \gg X, X = O(Y)$ and $Y = \Omega(X)$ all mean that $Y \geqslant cX$, for some absolute constant $c > 0$.  \\
$G$ always means a finite abelian group, and $p$ always means a large prime number.\\
For $A \subseteq G$ denote by $A$ its characteristic function $A : G \rightarrow \C$. For $A \subseteq \F_p$ we denote by $A^*$ the set $\{1/a \bmod p\ |\ a \neq 0 \}$.\\
Given $A, B \subseteq G$, define their \emph{sumset} by $A+B := \{a + b\ |\ a\in A, b \in B\}$. Given also $\eps > 0$, define their \emph{partial sumset} by
$
A+_{\eps}B := \{ x \in A + B : (A * B)(x) \geqslant \eps |G|\}.
$
Denote by $\overline{A}$ the set $G\setminus A$.
We write $A \neq B$ if $A \cap B = \varnothing$. 
Clearly, $A \neq \overline{A}$ for any $A \not\in \{\varnothing, G\}$.\\
For given $G$ we write $\hat{G} = \{\gamma : G \rightarrow \cS\ |\ \gamma \text { a homomorphism} \}$, where $\cS = \{ z \in \C : |z| = 1\}$. This $\hat{G}$ is called a \emph{dual group} of $G$. Note that for $G = \Z_N$ the dual group $\hat{\Z}_N$ consists of the functions $\gamma_r(x) := e^{2\pi i r x / N}$ for $ r \in \Z_N$.\\
Given functions $f, g : G \rightarrow \C$, define the \emph{convolution} $f * g : G \rightarrow \C$ by
$(f * g)(x) := \sum_{a, b : a+b = x}f(a)g(b). $\\
We define the Fourier transform $\hat{f} : \hat{G} \rightarrow \C$ by
$
\hat{f}(\gamma) := \sum_{x \in G}f(x)\overline{\gamma(x)}.
$
We assume the following properties to be well-known:
$$
f(x) = 
\frac{1}{|G|}
\sum_{\gamma \in \hat{G}}
\hat{f}(\gamma)\gamma(x), \ \ 
\sum_{x \in G
}f(x)\overline{g(x)} = 
\frac{1}{|G|}
\sum_{\gamma \in \hat{G}}
\hat{f}(\gamma)
\overline{\hat{g}(\gamma)},
\ \ 
\FF{f * g}(\gamma) = 
\hat{f}(\gamma) 
\hat{g}(\gamma),\ \ 
$$
which are the Inverse Fourier Transform, Parseval Identity, and Convolution Identity, respectively.\\
Denote by $||f||_{\omega}$ the \emph{Wiener norm}:
$||f||_{\omega} := \frac{1}{|G|}\sum_{\gamma \in \hat{G}}|\hat{f}(\gamma)|.$

\section{Discussion on Wrappers} \label{sec:discussion}
Here are the useful properties of the wrappers to be used in applications. Their proofs are deferred to Section \ref{sec:wrappers}.
\begin{property}
Let $W \subseteq G$ be an $\eps$-$d$-wrapper. Then its complement $\overline{W}$ is an $\eps$-$d$-wrapper, too.
\end{property}

\begin{property}
\label{property:wiener_bound}
Let $W \subseteq \F_p$ be an $\eps$-$d$-wrapper. Then
$
|W|_{\omega} \leqslant (C\log{p}/\eps)^d, 
$
where $C > 0$ is absolute.
\end{property}

\begin{property}
\label{property:wcapw}
Let $W_1, W_2 \subseteq \F_p$ be of Wiener norms $\omega_1, \omega_2$, respectively. Then
$$
|W_1 \cap W_2^*| = \frac{|W_1||W_2|}{p} + O(\omega_1 \omega_2\sqrt{p}).
$$
\end{property}

Let us mention the connection between wrappers and graph containers. Graph Container theorems were independently introduced by Saxton, Thomason (see \cite{saxton-thomason}) and Balogh, Morris and Samotij (see \cite{balogh-morris-samotij}), and had a remarkable impact on extremal combinatorics in recent years. \\
Let $\mathcal{H}$ be a $3$-uniform hypergraph. 
Denote by $V(\mathcal{H})$ the set of vertices, and by $E(\mathcal{H})$ the set of edges.
For any $V' \subset V$ denote by $\mathcal{H}[V']$ the subgraph induced by $V'$. 
Define by $d(\mathcal{H})$ the average vertex degree in $\mathcal{H}$, and by $\Delta_2(\mathcal{H})$ the maximal number of edges two particular vertices might belong to. Then, Corollary 3.6 \cite{saxton-thomason} gives roughly the following: 
\begin{namedtheorem}[$3$-Uniform Graph Container]
    Let $\mathcal{H} = (V, E)$ be a $3$-uniform hypergraph on $n$ vertices. Let $\eps, \tau$ be sufficiently small positive constants. 
    Suppose that $\Delta_2(\mathcal{H}) \ll \eps \tau d(\mathcal{H})$.  Then there exists a set $\mathcal{C}$ of subsets in $V$ such that
    \begin{enumerate}
        \item If $A \subset V$ is an independent set, then $A \subseteq C$ for some $C \in \mathcal{C}$,
        \item $|E(\mathcal{H}[C])| \leqslant \eps |E(\mathcal{H})|$ for any $C \in \mathcal{C}$,
        \item $\log{|\mathcal{C}|} \ll n\tau \log{(\frac{1}{\eps})}\log{(\frac{1}{\tau})}$.
    \end{enumerate}
\end{namedtheorem}
The set $\mathcal{C}$ above is referred to as a set of containers, and a set $C \in \mathcal{C}$ is itself a container.

One can notice from the proof of Supplementary Lemma, that there are just $O(2^{(1/\eps)^d}p^d)$ wrappers with given parameters $\eps, d$, which makes us able to formulate the result of the following form:
\begin{namedlemma}[Container Form of Supplementary]
    Let $\kappa(p) := p/(\log{p})^{c}$, where $c > 0$ is some absolute constant, which we do not state here. 
    Given $\chi > 0$, there exists $\mathcal{F}_{\chi} \subseteq 2^{\F_p}$ with $\log{|\mathcal{F}_{\chi}}| \ll \kappa(p)$ such that the following holds.\\
    Let $A, B, C \subseteq \F_p$, $|A|, |B|, |C| \geqslant \chi p$ be such that $A \neq B + C$. Then there exists $X_A, X_B, X_C \subseteq \mathcal{F}_{\chi}$ such that $A \subseteq X_A, B \subseteq X_B, C \subseteq X_C$ and $|X_A| + |X_B| + |X_C| \leqslant p + O(\kappa(p))$. 
\end{namedlemma}
Using a similar observation, we find a correct estimate for a number of triples $A, B, C \in 2^{G}$ such that $A \neq B + C$ in \cite{SSS}. We are planning to exploit this idea further in a follow-up paper.

\section{Results}
\label{sec:results}

\subsection{Covering $\fpstar$ by $A(A+A)$}

The first problem we are interested in is to determine a size of the set $A(A+A) = \{x(y+z), x,y,z\in A\}$ for $A \in \fpstar$. This sort of problems belongs to a range of expanding polynomials and sum-product problems. In the literature, they are usually discussed in the sparse-set regime; for instance, Roche-Newton, Rudnev, and Shkredov \cite{sum-prod}, Yazici, Murphy, Rudnev, and Shkredov \cite{growth} proved that in the regime $|A| \ll p^{2/3}$ one has $\min{(|A+AA|, |A(A+A)|)} \gg |A|^{3/2}$ (see also \cite{e-p}). This implies, in particular, that as soon as $|A| \gg p^{2/3}$ both sets $A(A+A)$ and $A+AA$ occupy a positive proportion of $\F_p$.\\
Now we focus on a case when $A$ already occupies a positive proportion of $\F_p$. Let $\alpha = |A|/p$, and we suppose $\alpha$ is bounded below by a positive constant when $p$ tends to infinity. Bienvenu, Hennecart, and Shkredov \cite{bhs19} proved that in such a regime $A(A+A)$ contains all but a finite number of elements. That is, when $A \subseteq \fpstar, |A| = \alpha p, 0 < \alpha < 1$ one has $|A(A+A)|~>~p~-~1~-~\alpha^{-3}(1-\alpha)^2~+~o(1)$. Besides that, they proved that this finite number of elements may be strictly larger than $1$, unless $\alpha$ is large enough. In particular, for any $k > 0$ there exists $A \subset \fpstar, |A| = (1/8k + o(1))p$, and $|A(A+A)| \leqslant p - 1 - k$.
In contrary, they demonstrated the following:
\begin{authortheorem}[Bienvenu-Hennecart-Shkredov, 2019]
Let $A \subseteq \F_p$ be such that $A(A+A)$ does not cover all nonzero residues. Then $|A| < 0.3051p + o(p)$.
\end{authortheorem}
We improve it as following:
\begin{theorem}
\label{thm:a(a+a)}
Let $A \subseteq \F_p$ be such that $A(A+A)$ does not cover all the nonzero residues. Then $|A| \leqslant p/8 + o(p)$.
\end{theorem}
{\it Remark.}
Consider sets 
$P = \{x: 0 < x < p/4\}$ and 
$Q = \{x: p/2 < x < p\}$. 
Set $A := P \cap Q^*$.
Clearly, $|A| = p/8 + o(p)$. 
Now observe $(A + A) \cap A^* = \varnothing$, and therefore $1 \not\in A(A+A)$, which implies that threshold $\alpha = 1/8$ is indeed optimal.

\subsection{On size of a sum-free $A$ with $A = A^*$}
$A \subseteq \F_p$ is called \emph{sum-free} if $A$ does not intersect its sumset $A + A$. From Cauchy-Davenport inequality  size of such $A$ cannot exceed $(p + 1)/3$, with equality when $A$ is (some dilation of) a segment $\{x: p/3 < x < 2p/3 \}$. As a generalization of this fact, it was shown in \cite{lev07} that the structure of a sum-free set $A$ with size close to $p/3$ looks like a segment of length $p/3$: namely, if $|A| > 0.318p$, then $A$ belongs to (some dilation of) the segment $\{|A|, \ldots, p - |A|\}$.\\
This gives an idea that the structure of a sum-free $A$ should be `arithmetical' in nature. It was conjectured by Benjamin in \cite{ben20} that events `$A$ is sum-free' and `$A = A^*$' should be independent in a sense that size of a sum-free $A$ with $A = A^*$ does not exceed $p/9 + o(p)$.\\
Bound in \cite{bhs19} shows already that size of such $A$ does not exceed $0.3051p$. Furthermore, the following was proved in \cite[Theorem 1.1]{ben20}:
\begin{authortheorem}[Benjamin, 2020]
There is an absolute constant $c > 0$ so that if $A \subseteq \F_p$ is sum-free and closed under inverses then $|A| < (0.25 - c)p + o(p)$.
\end{authortheorem}
One can deduce from the proof, that the value $c = 1/40\,000\,000$ is admissible.
We prove the following:
\begin{theorem}
\label{thm:a=a_inverse} 
If $A \subseteq \F_p$ is sum-free and closed under inverses, then $|A| < p/9 + o(p)$.
\end{theorem}
{\it Remark.}
Let $P := \{ x : p/3 < x < 2p/3\}$ Set $A := P \cap P^*$. Clearly, $|A| = p/9 + o(p)$, $A$ is sum-free, and $A = A^*$. Therefore, inequality $|A| \leqslant p/9 + o(p)$ is indeed optimal.

\subsection{On size of the set $A + A^*$}

The third problem we are interested in is to study the cardinality of the set $A + A^{*}$, $A \subseteq \fpstar$, where $A^*$ means the set of inverted elements of $A$, and $p$ is a large enough prime number.\\
Let $P := \{ x : 0 < x < \alpha p\}, A := P \cap P^{*}$. Clearly,
$|A| = (1 + o(1))\alpha^2 p$
and 
$|A + A^*| = (1 + o(1))\min(2\alpha p, p)$, which suggests the following: 

\begin{conjecture*}
There exists $\eps > 0$, such that for any $A \subseteq \fpstar, |A| \gg p^{1 - \eps}$
$$
|A+A^*| > (1 - o(1))\min{(2\sqrt{|A|p}, p)}.
$$
\end{conjecture*}

Best estimates for $A+A^*$ in different regimes of size of $|A| \sim p^{\lambda}$:

\begin{center}
\begin{tabular}{ |c|c|c|c| } 
\hline
$ \lambda \in [0, \frac{15}{29}]$ & $|A+A^*| \gg |A|^{31/30}$ & Yazici, Murphy, Rudnev, Shkredov & (Prop. 14, \cite{growth})\\ 
\hline
$\lambda \in [\frac{15}{29}, \frac{2}{3}]$ & $|A+A^*| > (1/2-o(1))|A|^2/\sqrt{p}$ & Balog, Broughan, Shparlinski & (Thm 7, \cite{a+a*})\\
 
\hline
$ \lambda \in [\frac{2}{3}, 1] $ & $|A+A^*| > (1-o(1))\sqrt{p|A|}$ & Balog, Broughan, Shparlinski & (Thm 7, \cite{a+a*})\\
 
\hline
\end{tabular}
\end{center}

We prove Conjecture above in case when $A$ is dense.
\begin{theorem}
\label{thm:a+a_inverse}
Let $A \subseteq \fpstar, |A| = \alpha p, \alpha \gg \kappa(p)$, where $\kappa(p) := \frac{\log\log{p}}{\sqrt{\log{p}}}$. Let $\delta$ be such that
$
\alpha^{3/2} \gg \delta \gg \alpha^{1/2}\kappa(p).
$
Then
$
|A +_\delta A^*| 
\geqslant 
\min(2\sqrt{|A|p},p) - O(\sqrt[3]{\delta} p)
$
\end{theorem}

\section{Auxiliary Lemmas}
\label{sec:aux}

Recall the Cauchy-Davenport Inequality. See \cite{nathanson}, Theorem 2.2 for a proof.
\begin{proposition}
Let $A, B$ be subsets of $\F_p$. Then $|A+B| \geqslant \min{(p, |A|+|B|-1)}$.
\end{proposition}
It has a following generalization, which might be deduced from the result of Pollard (see Equation 3 in \cite{pollard}) combined with the argument of Tao (see Corollary 1.2 in \cite{tao}).
\begin{proposition}
\label{prop:pollard}
Let $A, B$ be subsets of $\F_p$. Let $0 < \eps < \min{(\alpha^2, \beta^2)}$. Then 
$
|A+_{\eps}B| \geqslant \min{(p, |A|+|B|)}-(2+o(1))\sqrt{\eps}p.
$
\end{proposition}
 If $A, B, C \subset G$ are arbitrary nonempty sets and $A \neq B + C$, then $B \neq A - C$. This has a following generalization:
\begin{proposition}
\label{prop:struct}
Let $A, B, C \subset G$ be nonempty sets, $|A| = \alpha |G|, |C| = \gamma |G|$, and $\delta \in (0, 1)$. Suppose that $A \neq B +_{\delta} C$, then for any $T > 1$ there exists $E$, $|E| \leqslant |C| / T$, such that 
$
C\setminus E \neq A -_{\alpha \delta T / \gamma} B.
$
\end{proposition}
\begin{proof}
Let $E := C \cap (A -_{\eta} B)$, $\eta := \alpha \delta T / \gamma$.
Clearly,
$
{|E|\eta |G| \leqslant \#\{(a, b, c) \in A \times B \times C : a = b + c\} \leqslant |A|\delta |G|}.
$
This implies $|E| \leqslant |C|/T$, which completes the proof.
\end{proof}

\begin{proposition}\label{prop:supersaturation}
Let $\eta > 0$ and $X_1, \ldots, X_k \subseteq \F_p$ be such that $|X_i| \gg \eta p$, and $|X_1| + \ldots + |X_k| \geqslant p + \eta p$.
Then $(X_1 * \ldots * X_k)(0) \gg \eta^{2k-3}p^{k-1}$.
\end{proposition}
\begin{proof} 
    Let $\beta$ be a smallest density among sets $X_1, \ldots, X_k$ and let $\eps := \min(0.1\beta^2, \eta^2/16k^2)$.
    Let us set $V_1 := X_1$, and $V_{i} := V_{i-1} +_{\eps} X_{i}$ for $i = 2, \ldots, k-1$. 
    By an induction argument, this is clear that $|V_i| \geqslant |X_1| + \ldots + |X_i| - 3i\sqrt{\eps}p \geqslant \beta p$. Indeed, this holds for $i=1$, and for larger $i$ we have $|V_i| \geqslant |V_{i-1}| + |X_i| - (2+o(1))\sqrt{\eps}p$ by Proposition \ref{prop:pollard}.
    This is clear that 
    $$
    (X_1 * X_2 * \ldots * X_k)(0) = (V_1 * X_2 * \ldots X_k) (0) \geqslant
    (\eps p)(V_2 * \ldots * X_k)(0) \geqslant 
    \ldots \geqslant 
    $$
    $$
    \geqslant
    (\eps p)^{k-2}(V_{k-1} * X_k)(0) \geqslant 
    (\eps p)^{k-2}(|V_{k-1}| + |X_k| - p) \geqslant 
    (\eps p)^{k-2}(|X_1| + \ldots + |X_k| - p - 3k\sqrt{\eps}p) \geqslant  
    \eps^{k-3/2}p^{k-1},
    $$
    which completes the proof.
\end{proof}

\section{Main and Supplementary Lemma Applications}
\label{sec:applications}

\subsection{Proof of Theorem \ref{thm:a(a+a)}}
\begin{proof}
Without loss of generality we assume that $|A| \gg p$ and $1 \not\in A(A+A)$, hence $A^* \neq A + A$, and therefore the equation $x + y + z = 0$ has no solutions in $(x, y, z) \in -A^* \times A \times A$. Main Lemma application gives sets $W_i, Y_i$, such that $A^*\setminus Y_1 \subseteq W_1$, $A \setminus Y_2 \subseteq W_2, A \setminus Y_3 \subseteq W_3$. Denote densities of $W_i$ by $\omega_i$. Then $\omega_1 + \omega_2 + \omega_3 < 1 + o(1)$. From $A \setminus (Y_1^* \cup Y_2) \subseteq W_1^* \cap W_2$ we obtain
$$
|A| - |Y_1| - |Y_2| \leqslant |W_1^* \cap W_2| = (\omega_1 \omega_2 + o(1))p,
$$
which gives $(1 - o(1))\alpha \leqslant \omega_1 \omega_2 + o(1)$. Similarly, $\alpha \leqslant \omega_1 \omega_3 + o(1)$, from where 
$$
\alpha \leqslant
\omega_1 \frac{\omega_2 + \omega_3}{2} + o(1) \leqslant
\omega_1 \frac{1 - \omega_1}{2} + o(1) \leqslant
\frac{1}{8} + o(1).
$$
\end{proof}
\subsection{Proof of Theorem \ref{thm:a=a_inverse}}
Since $A$ is sum-free, the equation $x + y + z = 0$ has no solutions in $(x, y, z) \in -A \times A \times A$.
Main Lemma applicaton gives $W_i, Y_i$ such that $A \setminus Y_i \subseteq W_i$ for $i = 1, 2, 3$, and densities $\omega_i$ satisfy $\omega_1 + \omega_2 + \omega_3 < 1 + o(1)$.
By pigeonhole principle, some density (say, $\omega_3$) is at most $1/3 + o(1)$. \\
From $A = A^*$ it follows that $A \setminus (Y_3^* \cup Y_3) \subseteq W_3 \cap W_3^*$, which results in $\alpha \leqslant \omega_3^2 + o(1) \leqslant 1/9 + o(1)$.
 
\subsection{Proof of Theorem \ref{thm:a+a_inverse} (qualitative  version)}

Let $B := \overline{A + A^*}$ and $\beta := |B|/p$. 
We need to prove $\beta \leqslant 1- 2\sqrt{\alpha} + o(1)$.
Clearly, equation $x + y + z = 0$ has no solutions in $(x, y, z) \in A \times A^* \times -B$. Main Lemma application gives $W_i, Y_i$ such that $A \setminus Y_1 \subseteq W_1, A^* \setminus Y_2 \subseteq W_2, B \setminus Y_3 \subseteq W_3$, and densities $\omega_i$ satisfy $\omega_1 + \omega_2 + \omega_3 < 1 + o(1)$.\\
From $A \setminus (Y_1 \cup Y_2^*) \subseteq W_1 \cap W_2^*$ we obtain $\alpha(1 - o(1)) \leqslant \omega_1 \omega_2 + o(1)$, which implies $\sqrt{\alpha} + o(1) \leqslant \sqrt{\omega_1 \omega_2}$. 
Therefore, $\beta \leqslant \omega_3 + o(1) \leqslant 1 - \omega_1 - \omega_2 + o(1) \leqslant 1 - 2\sqrt{\omega_1 \omega_2} + o(1) \leqslant 1 - 2\sqrt{\alpha} + o(1)$.

\subsection{Proof of Theorem \ref{thm:a+a_inverse} (quantitative  version)}

\begin{namedlemma}[Technical]
\label{lem:tchn}
Let $X_1, X_2, X_3$ be sets in $\F_p$ of densities $\theta_1, \theta_2, \theta_3$, such that the equation $x + y + z = 0$ has no solutions in $(x, y, z) \in X_1 \times X_2 \times X_3$.
Let $T \gg 1$ and $\delta \in (0, 1)$ be some parameters. 
Assume that for some absolute constant $C > 0$ inequalities $\theta_1, \theta_2, \theta_3, 1/T, \delta > (\log{p})^{-C}$ hold.
Then there exists 
$\eps_1$-$d_1$-wrapper $W_1$,
$\eps_2$-$d_2$-wrapper $W_2$, 
sets $E_1, E_2$, 
such that 
$$
X_i \setminus E_i \subseteq W_i,\ 
|E_i| < |X_i| / T,\  \log{(1/\eps_i)} \ll \log\log{p},
$$
$$
d_1 \ll \max\bigg( \frac{\theta_2 \theta_3}{\delta^2}, 1 \bigg)\log\log{p}, \ \ 
d_2 \ll \max\bigg( \frac{w_1 \theta_3}{\eta^2}, 1 \bigg)\log\log{p},
$$
and
$- (W_2 \setminus E_2) \neq (W_1 \setminus E_1) +_{\eta} X_3$,
where $w_1 := |W_1|/p$, and $\eta = \Theta( w_1 \delta T / \theta_2)$. All the $\ll$ signs assume dependence on $C$.
\end{namedlemma}

\begin{proof}
Clearly, $-X_1 \neq X_2+_{\delta}X_3$.
Let $\xi := (\log{p})^{-10C}$. Clearly, $\xi$ is much lesser than $1/T$.
Supplementary Lemma application (with $\delta$ in place of both $\delta, \eta$) gives  $\eps_1$-$d_1$-wrapper $W_1$ and set $Y_1$ such that 
$$
X_1 \setminus Y_1 \subseteq W_1, \ \ 
- (W_1 \setminus Y_1) \neq X_2 +_{2\delta} X_3, \ \ 
|Y_1| \leqslant \xi p,
$$
$$
\eps_1 \gg \min{\bigg(\frac{\delta}{\sqrt{\theta_2 \theta_3}}, 1\bigg)}, \ \ 
d_1
\ll 
\max{\bigg(\frac{\theta_2 \theta_3}{\delta^2}, 1\bigg)}\log\log{p}.
$$

Let $|W_1| = w_1 p$. 
By the Proposition \ref{prop:struct} there exists $E, |E| \leqslant |X_2|/2T$, such that 
$-(X_2\setminus E) \neq (W_1 \setminus E_1) +_{\eta}X_3 $, where $\eta := 4w_1 \delta T/\theta_2$. Another application of Supplementary Lemma gives $\eps_2$-$d_2$-wrapper $W_2$ and set $Y_2$ such that 
$$
X_2 \setminus (E \cup Y_2) \subseteq W_2, \ \ 
-(W_2 \setminus E_2) \neq (W_1 \setminus Y_1) +_{2\eta}X_3, \ \ , 
\ \ 
|E \cup Y_2| \leqslant |X_2|/2T + \xi p < |X_2|/T,
$$
$$
\eps_2 \gg \min{\bigg(\frac{\eta}{\sqrt{w_1 \theta_3}}, 1\bigg)}, \ \ 
d_2 
\ll 
\max{\bigg(\frac{w_1 \theta_3}{\eta^2}, 1\bigg)}\log\log{p},
$$
which completes the proof.
\end{proof}

\begin{proof}
Let $B := \overline{A + A^*}$. Let $\beta := |B|/p$. 
Suppose that $\beta > 1 - 2\sqrt{\alpha}$, since otherwise there is nothing to prove.
Clearly, equation $x + y  + z = 0$ has no solutions in $x \in A, y \in A^*, z \in -B$. 
Therefore, the Technical Lemma is applicable with $X_1 := A, X_2 := A^*, X_3 := -B$. 
Let $T := c\sqrt[6]{\alpha^3 / \delta^2}$, where $c > 0$ is sufficiently small. Clearly, inequality $T \gg 1$ is satisfied. 
We obtain $\eps_1$-$d_1$-wrapper $W_1$, $\eps_2$-$d_2$-wrapper $W_2$, and $E_1, E_2$, such that
$
A\setminus E_1 \subseteq W_1, \ 
A^* \setminus E_2 \subseteq W_2,
$
$
|E_i| < |A|/T,\ 
\log{(1/\eps_i)} \ll \log\log{p}
\text{ for } i \in \{1, 2\},
$
and 
$$
   d_1 \ll \max\bigg( \frac{\alpha}{\delta^2}, 1 \bigg)\log\log{p}, \ 
   d_2 \ll \max\bigg( \frac{w_1}{\eta^2}, 1 \bigg)\log\log{p},
$$
 and $-(W_2 \setminus E_2) \neq (W_1 \setminus E_1) + _{\eta}X_3$, where $\eta = \Theta(w_1 \delta T / \alpha)$, $w_1 := |W_1| / p$.\\
 Let us assume first that $w_1 \gg \sqrt{\alpha}$.\\
To enforce $W_1, W_2$ to have Wiener norms at most $p^{0.001}$, we need $d_1 \ll \log{p}/\log\log{p}$ (which is satisfied by $\delta \gg \kappa(p)\sqrt{\alpha}$), and $d_2 \ll \log{p} / \log\log{p}$ (which is satisfied by the fact that $\omega_1 \gg \sqrt{\alpha}$ and $T^4 \gg \frac{\alpha^3}{\delta^4}\frac{(\log\log{p})^4}{(\log{p})^2} \Leftrightarrow \delta^8 \gg \alpha^3 \kappa(p)^{12}$) 

Since Wiener norms are small, $A \setminus (E_1 \cup E_2^*) \subseteq W_1 \cap W_2^*$ implies the inequality $\alpha(1 - 2/T) \leqslant w_1 w_2(1 + O(p^{-\eps}))$.\\
To make Proposition \ref{prop:pollard} applicable to $(W_1 \setminus E_1) +_{\eta}B$, we want to enforce  $\eta \ll \min(w_1^2, \beta^2)$. This is satisfied by the fact $T^2 \ll \alpha^3 / \delta^2$. Thus,
$$
|W_2 \setminus E_2| < p - |(W_1\setminus E_1) +_{\eta} B| < p - |W_1\setminus E_1| - |B| + 2\sqrt{\eta}p, 
$$
from where 
$
\beta + w_1 + w_2 < 1 + O(\sqrt{\eta}) + O(\alpha / T).
$
We can assume that both $O(\sqrt{\eta})$ and $O(\alpha / T)$ are sufficiently smaller than $w_1$, and so $w_1 + w_2 \ll 1 - \beta \ll \sqrt{\alpha}$ by the assumption made in the beginning.\\
Therefore from now on we can assume $w_1 \ll \sqrt{\alpha}$, and $\eta \ll w_1 \delta T/\alpha \ll \delta T/\sqrt{\alpha}$.\\
Writing same inequality again, we obtain
$$
\beta < 1 - w_1 - w_2 + O(\sqrt{\eta}) + O(\alpha / T) < 
1- w_1 - w_2 + O(\sqrt{\delta T}/\sqrt[4]{\alpha}) + O(\alpha / T) < 
$$
$$
< 1 - 2\sqrt{\omega_1 \omega_2} + 
O(\sqrt{\delta T}/\sqrt[4]{\alpha}) + O(\alpha / T) < 
1 - 2\sqrt{\alpha} + O(\sqrt{\alpha}/T) + 
O(\sqrt{\delta T}/\sqrt[4]{\alpha}).
$$

The choice $T = c\sqrt[6]{\alpha^3 / \delta^2}$ gives error-term $O(\delta^{1/3})$, as declared.

\medskip

Now let us comment on why we could have assumed $w_1 \gg \sqrt{\alpha}$. One can apply Technical Lemma with $X_1 := A, X_2 := A^*, X_3 := B$ to obtain $\eps_1$-$d_1$-wrapper $W_1$ to wrap $A$, or with $X_1 := A^*, X_2 := A, X_3 := B$ to obtain $\eps_1'$-$d_1'$-wrapper $W_1'$ to wrap $A^*$. Let us denote their densities by $w_1, w_1'$, respectively. Then, since $d_1, d_1' \ll \alpha / \delta^2 \ll \log{p}/(\log\log{p})^2$, Wiener norms of $W_1, W_1^*$ can be considered polynomially small, and therefore $A \setminus (E_1 \cup E_1'^*)\subseteq W_1 \cap W_1'^*$ implies $\alpha \leqslant w_1 w_1'$. We assume without loss of generality that $w_1 \gg \sqrt{\alpha}$, which completes the proof.
\end{proof}

\section{Wrappers}
\label{sec:wrappers}

\subsection{Definition and main result on wrappers}

Let $\eps \in (0, 1/2)$ be such that $K := 1/\eps$ is a positive integer, and let $\gamma_1, \ldots, \gamma_d : G \rightarrow \cS$ be some homomorphisms (not necessarily different). 
Split circle $\cS$ into $K$ equal arcs: $\cS_1, \ldots, \cS_K$. 

\begin{definition}

An \emph{$\eps$-$d$-block} is a set of the form
$$
\bigg\{ 
g \in G : 
\gamma_1(g) \in \cS_{i_1}, 
\ldots, 
\gamma_d(g) \in \cS_{i_d}
\bigg\}
$$
 for some $1 \leqslant i_1, \ldots, i_d \leqslant K$.
\end{definition}

\begin{definition}
Assume a set of $d$ homomorphisms $\gamma_1, \ldots, \gamma_d$ to be fixed. We define $\eps$-$d$-\emph{wrapper} to be a union of some $\eps$-$d$-blocks.
\end{definition}
Clearly, distinct $\eps$-$d$-blocks are disjoint and their union constitutes the whole $G$. This proves, that complement of a wrapper is a wrapper, too.\\
The following statement is the main result on wrappers, with which one can `approximate' level-sets of functions by $\eps$-$d$-blocks with a certain precision.

\begin{namedtheorem}[Wrapper]
\label{thm:decomposition}
Let $f : G \rightarrow \C$ be a function with Wiener norm $\omega$. Let $\xi \in (0, 1/100)$ be some parameter. Let $\delta \leqslant \omega$ be some positive real number. 
Then there exist $\eps \in (0, 1/2)$, positive integer $d$, and  a decomposition 
$$
f = g + h + k,
$$
such that:
\begin{itemize}
    \item $\|h\|_{\infty} \leqslant \delta/10$,
    \item $|\supp{k}| \leqslant \xi |G|$.
    \item There is a disjoint union of $G = \sqcup B_i$ into $\eps$-$d$-blocks, such that $g$ is constant on each block $B_i$. 
    \item $\eps \gg \delta / \omega$, $d \ll \frac{\omega^2}{\delta^2}\log{\frac{1}{\xi}}$.
\end{itemize}
\end{namedtheorem}
\begin{proof}
This is a Corollary 3.6 from \cite{cls}:
\begin{namedlemma}[Croot-Laba-Sisask]
\label{lem:cls}
Let $G$ be a finite abelian group and let $\eps \in (0, 1)$ and $q > 2$ be parameters. For any nonzero function $f : G \rightarrow \C$ of Wiener norm $\omega$ there are characters $\gamma_1, \gamma_2, \ldots, \gamma_d \in \hat{G}$ with $d \leqslant Cq/\eps^2$ and coefficients $c_1, \ldots, c_d \in \C$ with $|c_j| \leqslant 1$ such that 
$$
\|f/\omega - \frac{1}{d}(c_1\gamma_1 + \ldots + c_d\gamma_d)\|_{L^q} \leqslant \eps.
$$
\end{namedlemma}

As usual, \emph{$L_q$-norm} of function $f : G \rightarrow \C$ is defined by $\|f\|_{L_q} := \bigg(\frac{1}{|G|}\sum_{x}|f(x)|^q \bigg)^{1/q}$.\\
Apply Croot-Laba-Sisask Lemma with the parameters $f := f$, $\eps := c\frac{\delta}{\omega}$ for sufficiently small $c>0$ (say, $\sim 1/1000$), such that $1/\eps$ is an integer, and $q := \log{\frac{1}{\xi}}$.
Clearly, conditions $\eps \in (0, 1)$ and $q > 2$ required by Croot-Laba-Sisask Lemma are satisfied.\\
This gives a function 
$g(x) := \frac{1}{d}(c_1\gamma_1(x) + \ldots + c_d\gamma_d(x))$ with $|c_j| \leqslant 1$ and $\gamma_1, \ldots, \gamma_d$ being some elements in $\hat{G}$, such that $\|f/\omega - g\|_{L^q} \leqslant \eps$, and $d \leqslant Cq/\eps^2  \ll \frac{\omega^2}{\delta^2}\log{\frac{1}{\xi}}$. We now adapt $g$ to satisfy the conditions in Theorem's statement. \\
Consider any $\eps$-$d$-block $B$, with respect to homomorphisms $\gamma_1, \ldots, \gamma_d$. 
It is rather trivial to check that $g$ varies by at most $O(\eps)$ on $B$. Indeed, if $x, y$ belong to the same block, we have
$$
|g(x) - g(y)| = 
\frac{1}{d} \big|\sum_{i=1}^d c_i (\gamma_i(x) - \gamma_i(y)) \big| \leqslant
\frac{1}{d}\sum_{i=1}^d|c_i|\frac{2\pi}{K} \leqslant
\frac{1}{d}\frac{2\pi d}{K} = 2\pi \eps,
$$
where $K := 1/\eps$.
Let us now update values of $g$ by at most $2\pi \eps$, such that $g$ is now a constant on each block, and this constant belongs to $\eps \Z$. Clearly, now $\|f/\omega - g\|_{L_q} \leqslant (2\pi + 1)\eps < 10\eps$.\\
Define $h$ as $h := \omega(f/\omega - g)$. 
Let $Y$ be a set of those elements $y$ which do not satisfy $h(y) \leqslant 100\omega\eps$. Since $\|h\|_{L_q} \leqslant 10\omega\eps$ we see that
$
|Y|(100\omega\eps)^q \leqslant \sum_{x}h(y)^q \leqslant |G|(10\omega\eps)^q,
$
which gives $|Y| \leqslant |G|/10^q < |G|/e^q = \xi |G|$.\\
Let $k$ be a functions equal $h$ on $Y$ and identically zero elsewhere. We update $h$ such that equality $f = \omega g + h + k$ still holds; clearly, $\|h\|_{\infty} \leqslant 100\omega\eps \leqslant \delta/10$ and $|\supp{k}| = |Y| \leqslant \xi |G| $.

\end{proof}

{\it Remark.} Note that this result is essentially equivalent to the Croot-Laba-Sisask Lemma.\\
{\it Remark.} If instead of applying Croot-Laba-Sisask Lemma we have directly considered the decomposition $f = \sum_{\gamma}\hat{f}(\gamma)\gamma$, we then would still have a meaningful result, but with bound $d \ll \frac{\omega^2}{\delta^2}\frac{1}{\xi}$ instead of $d \ll \frac{\omega^2}{\delta^2}\log{\frac{1}{\xi}}$.

\subsection{Wrapping level-sets of functions}

\begin{corollary}\label{cor:wrapping}
Let $f : G \rightarrow \R$ be a function of Wiener norm $\omega$. Let $\eta_1, \eta_2$ be arbitrary reals, and let $0 < \xi < 1/100$ and $\delta > 0$ be parameters. 
Then there exists $\eps$-$d$-wrapper $W$ and set $Y$ such that $|Y| \leqslant \xi |G|$, and all $x \not \in Y$ satisfy the following:
\begin{enumerate}
    \item If $l_1 \leqslant f(x) \leqslant l_2$, then $x \in W$,
    \item If $f(x) \leqslant l_1 - \delta$ or $f(x) \geqslant l_2 - \delta$, then $x \not\in W$.
\end{enumerate}
Moreover,
$
\eps \gg \min(\frac{\delta}{\omega}, 1),\ \  
d \ll \max(\frac{\omega^2}{\delta^2}, 1)\log{\frac{1}{\xi}}.
$
\end{corollary}
\begin{proof}
If $\delta > \omega$, we replace it with $\delta := \omega$, since it does not affect the statement.\\
Application of the Wrapper Theorem with the parameters $f := f$, $\delta := \delta$ gives a decomposition $f = g + h + k$. Setting $Y := \supp k$, one obtains $|Y| \leqslant \xi|G|$ . 
Now consider a set 
$$
W := \{ x : l_1 - 0.5\delta \leqslant g(x) \leqslant l_2 + 0.5\delta\}.
$$
Since $g$ is constant on each $\eps$-$d$-block, if one element of the block is in $W$, then the whole block is there. 
Thus it is clear that $W$ is a union of $\eps$-$d$-blocks, and therefore is an $\eps$-$d$-wrapper.\\
Let us demonstrate $W$ satisfies its declared properties. \\
Let us take some $x$ outside of the set $Y$. Clearly, $f(x) = g(x) + h(x)$ in this case, and therefore $|f(x) - g(x)| \leqslant 0.1\delta$. \\
If $x$ is such that $l_1 \leqslant f(x) \leqslant l_2$, then $l_1 - 0.1\delta \leqslant g(x) \leqslant l_2 + 0.1\delta$, and thus $x \in W$.\\
If $x$ is such that $f(x) \geqslant l_2 + \delta$, then $g(x) \geqslant l_2 + 0.9\delta$, and thus $x \not\in W$. Same for $f(x) \leqslant l_1 - \delta$.
\end{proof}

\subsection{Properties}

Recall some facts on Wiener norm of sets and functions.
\begin{proposition}
\label{prop:win-pr}
Let $P$ be an arithmetic progression in $\F_p$. Then $\|P\|_\omega \ll \log{p}$.
\end{proposition}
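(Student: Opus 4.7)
The plan is standard. Since dilation by a unit in $\F_p$ permutes the characters of $\widehat{\F_p}$, it preserves the Wiener norm, and translation only multiplies each Fourier coefficient by a unit-modulus character. I may therefore assume $P = \{0, 1, \dots, n-1\}$ for some $n \leqslant p$.

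First I compute the Fourier transform explicitly. For $t \in \F_p$ nonzero, a geometric series gives
$$
\hat{P}(t) \;=\; \sum_{x=0}^{n-1} e^{-2\pi i tx/p} \;=\; \frac{e^{-2\pi i tn/p} - 1}{e^{-2\pi i t/p} - 1},
$$
so $|\hat{P}(t)| = |\sin(\pi tn/p)|/|\sin(\pi t/p)|$, while $\hat{P}(0) = n$. Using the two bounds $|\hat{P}(t)| \leqslant n$ and $|\hat{P}(t)| \leqslant 1/|\sin(\pi t/p)|$, together with the inequality $|\sin(\pi t/p)| \geqslant 2\|t/p\|$ where $\|\cdot\|$ denotes distance to the nearest integer, yields
$$
|\hat{P}(t)| \;\leqslant\; \frac{p}{2\,|t|_{\F_p}}, \qquad |t|_{\F_p} := \min(t, p-t),
$$
for all nonzero $t$.

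Plugging into the definition of $\|P\|_\omega$,
$$
\|P\|_\omega \;=\; \frac{1}{p}\sum_{t \in \F_p}|\hat{P}(t)| \;\leqslant\; \frac{n}{p} \;+\; \frac{1}{p}\sum_{1 \leqslant |t|_{\F_p} \leqslant (p-1)/2} \frac{p}{2\,|t|_{\F_p}} \;\leqslant\; 1 \;+\; \sum_{k=1}^{(p-1)/2} \frac{1}{k} \;\ll\; \log p,
$$
which is the stated bound.

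There is no real obstacle here: the only care is in the reduction step (checking that dilation by a unit genuinely preserves $\|\cdot\|_\omega$, which follows because the map $\gamma \mapsto \gamma \circ (d\cdot)$ is a bijection of $\widehat{\F_p}$) and in the elementary estimate $|\sin(\pi t/p)| \geqslant 2\|t/p\|$, both of which are routine.
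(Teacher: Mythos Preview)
Your proof is correct and is the standard argument for this classical fact. The paper itself does not supply a proof of this proposition at all --- it is simply stated as a known preliminary --- so there is nothing to compare against; your write-up fills in exactly the routine computation the author omitted.
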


\begin{proposition}
\label{prop:win-int}
Let $X, Y$ be 2 subsets of $G$. Then
$\|X \cap Y\|_\omega \leqslant \|X\|_\omega \|Y\|_\omega$.
\end{proposition}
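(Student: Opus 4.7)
The plan is to observe that $\mathbf{1}_{X \cap Y} = \mathbf{1}_X \cdot \mathbf{1}_Y$, so the statement reduces to the general fact that the Wiener norm is submultiplicative under pointwise multiplication, i.e. $\|fg\|_\omega \leqslant \|f\|_\omega \|g\|_\omega$ for any $f, g : G \to \mathbb{C}$. This in turn is the dual of the convolution identity already recalled in the preliminaries: pointwise multiplication on the physical side corresponds to convolution on the Fourier side.

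Concretely, I would first expand both $f(x)$ and $g(x)$ using the Fourier inversion formula, substitute into $\widehat{fg}(\gamma) = \sum_x f(x)g(x)\overline{\gamma(x)}$, and then exploit the character orthogonality relation $\sum_{x \in G} \alpha(x)\beta(x)\overline{\gamma(x)} = |G|\,\mathbf{1}[\alpha\beta = \gamma]$ to collapse the double sum. This yields the identity
$$
\widehat{fg}(\gamma) \;=\; \frac{1}{|G|}\,\bigl(\hat f * \hat g\bigr)(\gamma),
$$
where the convolution on the right is taken on the dual group $\hat G$.

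Once this is in hand, the rest is the standard Young-type inequality for $\ell^1$ convolution. Applying the triangle inequality inside the convolution gives
$$
\bigl|\widehat{fg}(\gamma)\bigr| \;\leqslant\; \frac{1}{|G|}\sum_{\alpha\beta=\gamma}|\hat f(\alpha)|\,|\hat g(\beta)|,
$$
and summing over $\gamma \in \hat G$ unfolds the convolution into a product:
$$
\sum_{\gamma \in \hat G}\bigl|\widehat{fg}(\gamma)\bigr| \;\leqslant\; \frac{1}{|G|}\Bigl(\sum_{\alpha}|\hat f(\alpha)|\Bigr)\Bigl(\sum_{\beta}|\hat g(\beta)|\Bigr).
$$
Dividing through by $|G|$ gives $\|fg\|_\omega \leqslant \|f\|_\omega\|g\|_\omega$, and specializing to the indicators of $X$ and $Y$ yields the claim.

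There is essentially no obstacle here: the only thing to be careful about is tracking the $|G|^{-1}$ normalization factors consistently with the (unnormalized) Fourier convention fixed in Subsection \ref{subsec:wrapper_preliminaries}, so that the constants in the pointwise-product identity and in the definition of $\|\cdot\|_\omega$ cancel cleanly to leave a clean submultiplicative inequality without any stray factors of $|G|$.
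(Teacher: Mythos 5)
Your proof is correct and takes the same route as the paper: the paper states the identity $\widehat{XY}(\xi) = \frac{1}{|G|}\sum_{\eta}\hat X(\xi-\eta)\hat Y(\eta)$ and leaves the rest as routine, while you simply fill in the triangle-inequality/Young step and the normalization bookkeeping that the paper omits.
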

\begin{proof}
It is easy to see that for any $\xi \in \hat{G}$ equality
$
\FF{XY}{(\xi)} = \frac{1}{|G|}\sum_{\eta \in \hat{G}}\hat{X}(\xi - \eta)\hat{Y}(\eta)
$
takes place, from where it is easy to imply the Proposition's statement.
\end{proof}

\begin{definition}
For $X$ an arbitrary arc on $\cS$, and $\psi : G \rightarrow \cS$ an arbitrary homomorphism (possibly trivial) we set $\psi^{-1}(X)$ to be those elements $g \in G$, such that $\psi(g) \in X$. Denote
$$
w(G) := 
\max_{\substack{\psi - \text{homomorphism}, \\ X - \text{arbitrary arc}}}
\|\psi^{-1}(X)\|_{\omega}.
$$
\end{definition}
Note that $w({\Z_N}) \ll \log{N}, w({\Z_p^N}) \ll p$. In general, $w(G) \ll \exp(\log^{1/2 + o(1)}{|G|})$.\\
The preimage of $\cS_i$ under any homomorphism has Wiener norm at most $w(G)$, and any $\eps$-$d$-block is an intersection of $d$ preimages. This implies Property \ref{property:wiener_bound}:
\begin{proposition}
Let $B$ be an $\eps$-$d$-block in abelian group $G$. Then 
$
\|B\|_{\omega} \leqslant \omega(G)^d.
$
\end{proposition}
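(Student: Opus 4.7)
The plan is to combine the explicit description of an $\eps$-$d$-block as an intersection of $d$ homomorphism-preimages with the submultiplicativity of the Wiener norm under intersection, namely Proposition \ref{prop:win-int}. The paragraph preceding the proposition essentially sketches this, so the task is only to verify both ingredients carefully.

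First I would unfold the definition: by construction, an $\eps$-$d$-block has the form
$$
B = \{ g \in G : \gamma_1(g) \in \cS_{i_1}, \dots, \gamma_d(g) \in \cS_{i_d}\} = \bigcap_{j=1}^{d} \gamma_j^{-1}(\cS_{i_j})
$$
for certain homomorphisms $\gamma_1, \dots, \gamma_d : G \to \cS$ and arcs $\cS_{i_1}, \dots, \cS_{i_d}$ of $\cS$. Since each $\cS_{i_j}$ is an arc of the unit circle and each $\gamma_j$ is a homomorphism (possibly the trivial one), the very definition of $w(G)$ gives $\|\gamma_j^{-1}(\cS_{i_j})\|_\omega \leqslant w(G)$ for every $j$.

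Next I would iterate Proposition \ref{prop:win-int}: at each step, intersecting the partial intersection with one more set $\gamma_j^{-1}(\cS_{i_j})$ multiplies the Wiener norm by at most $w(G)$. Applying this $d$ times yields
$$
\|B\|_\omega \;\leqslant\; \prod_{j=1}^{d}\|\gamma_j^{-1}(\cS_{i_j})\|_\omega \;\leqslant\; w(G)^d,
$$
which is the asserted bound. There is no real obstacle in this argument: the substance has already been packaged into the definition of $w(G)$ and into the submultiplicativity of the Wiener norm under intersection, and the proposition is essentially a formal consequence recorded for later use, for instance in combination with Proposition \ref{prop:wcapw} and Proposition \ref{eq:wiener_bound}.
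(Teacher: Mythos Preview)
Your proof is correct and follows exactly the approach in the paper: write the block as an intersection of $d$ preimages $\gamma_j^{-1}(\cS_{i_j})$, bound each by $w(G)$ via the definition, and iterate Proposition~\ref{prop:win-int}. This is precisely what the paper does (the sentence ``we just exploit Proposition~\ref{prop:win-int}'' together with the preceding paragraph).
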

To prove it, we just exploit Proposition \ref{prop:win-int}. 
\begin{proposition}
\label{prop:intersec}
Let $W$ be an $\eps$-$d$-wrapper in abelian group $G$. Then
$
\|W\|_\omega \leqslant (\omega(G) / \eps )^d.
$
\end{proposition}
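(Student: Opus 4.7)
The plan is to combine the preceding proposition, which bounds the Wiener norm of a single $\eps$-$d$-block by $\omega(G)^d$, with the triangle inequality for the Wiener norm. First, I observe that with the $d$ homomorphisms $\gamma_1, \dots, \gamma_d$ fixed, an $\eps$-$d$-block is completely determined by the choice of one arc $\cS_{i_j}$ out of $K = \ceil*{1/\eps}$ for each $j = 1, \dots, d$, so there are at most $K^d$ distinct $\eps$-$d$-blocks, and they form a partition of $G$.

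Next, I write $W$ as a disjoint union $W = \sqcup_{i \in I} B_i$ of some collection of $\eps$-$d$-blocks indexed by $I$ with $|I| \leqslant K^d$. Then $\mathbf{1}_W = \sum_{i \in I} \mathbf{1}_{B_i}$, and the triangle inequality for the Wiener norm together with the previous proposition gives
$$
\|W\|_\omega \leqslant \sum_{i \in I} \|B_i\|_\omega \leqslant K^d \cdot \omega(G)^d.
$$

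Since $\eps \in (0, 1]$, we have $K = \ceil*{1/\eps} \leqslant 2/\eps$ (because $1/\eps \geqslant 1$ forces $\ceil*{1/\eps} \leqslant 1/\eps + 1 \leqslant 2/\eps$), and substituting yields the claimed bound $\|W\|_\omega \leqslant (2\omega(G)/\eps)^d$. There is no real obstacle: the argument is an immediate consequence of the block-level bound already established, and the only quantitative point is the crude but sufficient estimate $\ceil*{1/\eps} \leqslant 2/\eps$.
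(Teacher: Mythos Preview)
Your proof is correct and is essentially identical to the paper's own argument: both use the triangle inequality to bound $\|W\|_\omega$ by the number of blocks times $\omega(G)^d$, and then bound the number of blocks by $(2/\eps)^d$. You have simply made the estimate $\ceil*{1/\eps}\leqslant 2/\eps$ explicit, which the paper leaves implicit.
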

To prove it, we just exploit the triangle inequality:
$$
\|W\|_{\omega} \leqslant
\sum_{\substack{B \text{ is an } \eps-d-\text{block},\\ B \subseteq W}}
\|B\|_{\omega}
\leqslant 
\#\{B \subseteq W, B \text{ is an  $\eps$-$d$-block}\}\omega(G)^d 
\leqslant
(1/\eps)^d\omega(G)^d.
$$
Thus, the Wiener norm of a wrapper is also bounded in terms of $\eps, d$, and $\omega(G)$.\\
Recall a Weil's bound for Kloosterman sum \cite{weil}. For any $a, b \neq 0$,
$
\bigg| \sum_{z \neq 0}e_p(az + bz^*) \bigg| \leqslant 2\sqrt{p}.
$
Using Weil bound, we derive the following:
\begin{proposition}
Let $X \subseteq \fpstar$. Then inequality
$\max_{\xi \neq 0}|\FF{X^{*}}(\xi)| \leqslant 2\sqrt{p}\|X\|_\omega$ holds.
\end{proposition}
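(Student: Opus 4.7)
The plan is to express $\widehat{X^*}(\xi)$ as a weighted sum of complete Kloosterman sums by expanding the indicator of $X$ via Fourier inversion, and then invoke Weil's bound frequency-by-frequency.

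First I would write, directly from the definition of the Fourier transform,
\[
\widehat{X^{*}}(\xi)=\sum_{y\in X^{*}}\overline{e_p(\xi y)}=\sum_{x\in X}e_p(-\xi x^{-1}),
\]
using that $x\mapsto x^{-1}$ is a bijection of $X$ onto $X^{*}$ (note that $0\notin X$ since $X\subseteq\mathbb{F}_p^{*}$). Next I would apply Fourier inversion to the indicator function of $X$, writing $X(x)=\frac{1}{p}\sum_{\eta}\widehat{X}(\eta)e_p(\eta x)$, and substitute this into the previous display to obtain
\[
\widehat{X^{*}}(\xi)=\frac{1}{p}\sum_{\eta\in\widehat{\mathbb{F}}_p}\widehat{X}(\eta)\sum_{x\neq 0}e_p(\eta x-\xi x^{-1}).
\]

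The inner sum is now exactly a Kloosterman sum (with the roles of $a$ and $b$ in the Weil bound given by $a=\eta$ and $b=-\xi$). Since by hypothesis $\xi\neq 0$, the Weil bound of the preceding lemma yields $|\sum_{x\neq 0}e_p(\eta x-\xi x^{-1})|\leq 2\sqrt{p}$ whenever $\eta\neq 0$. For the single term $\eta=0$ the inner sum collapses to $\sum_{x\neq 0}e_p(-\xi x^{-1})=-1$, whose modulus is trivially at most $2\sqrt{p}$. Taking absolute values and applying the triangle inequality therefore gives
\[
|\widehat{X^{*}}(\xi)|\leq\frac{2\sqrt{p}}{p}\sum_{\eta}|\widehat{X}(\eta)|=2\sqrt{p}\,\|X\|_{\omega},
\]
which is the stated bound.

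There is no real obstacle here beyond bookkeeping: the substance is entirely in Weil's estimate, and the only point that requires a moment of care is the $\eta=0$ contribution, which must be absorbed into the final estimate by hand rather than directly through Weil. Everything else is Fourier inversion and the triangle inequality.
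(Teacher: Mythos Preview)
Your proof is correct and follows essentially the same route as the paper: expand $X$ by Fourier inversion, swap the order of summation to obtain Kloosterman sums, and apply the Weil bound termwise. The only difference is that you handle the $\eta=0$ term explicitly (noting it equals $-1$), whereas the paper absorbs it silently into the final estimate; your version is in fact the more careful of the two, since the Weil bound as stated in the paper requires both parameters to be nonzero.
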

\begin{proof}
$$
\FF{X^*}(\xi) =
\sum_{z}X^*(z)e_p(-\xi z) =
\sum_{z}X(z)e_p(-\xi z^*) = 
$$
$$
=
\frac{1}{p}
\sum_{z} \bigg( \sum_{\eta}\hat{X}(\eta)e_p(\eta z)\bigg)e_p(-\xi z^*) = 
\frac{1}{p}
\sum_{\eta}\hat{X}(h)\sum_{z}e_{p}(\eta z - \xi z^*),
$$
which obviously does not exceed $2\sqrt{p}\|X\|_\omega$.
\end{proof}
Now we prove Property \ref{property:wcapw}:
\begin{proof}
By Fourier Transform properties, 
$$
|W_1 \cap W_2^*| = 
\sum_{x}W_1(x)W_2^*(x) = 
\frac{1}{p}
\sum_{h}\hat{W_1}(h)\hat{W_2^*}(h) =
\frac{|W_1| |W_2|}{p} + 
\frac{1}{p}
\sum_{h\neq 0}\hat{W_1}(h)\hat{W_2^*}(h).
$$
Therefore,
$$
\bigg|\big|W_1\cap W_2^*| - \frac{|W_1||W_2|}{p}\bigg|
\leqslant
\frac{1}{p}\max_{h\neq 0}|\hat{W_2^*}(h)|
\sum_{h\neq 0}|\hat{W_1}(h)|
\leqslant 2\sqrt{p}\omega_2\omega_1.  
$$
\end{proof}

\section{Proof of Main and Supplementary Lemmas}
\label{sec:main_supp_proofs}

\subsection{Proof of the Supplementary Lemma}
\begin{proof}
Let $A, B \subseteq G, |G| = N, |A| = \alpha' N, |B| = \beta' N, \alpha' \geqslant \alpha, \beta' \geqslant \beta$. Since 
$$ 
\|A * B \|_{\omega} = 
\frac{1}{p}
\sum_{\gamma \in \hat{\F}_p}|\hat{A}(\gamma)| |\hat{B}(\gamma)|
\leqslant 
\frac{1}{p}
\sqrt{\sum_{\gamma \in \hat{G}}|\hat{A}(\gamma)|^2}
\sqrt{\sum_{\gamma \in \hat{G}}|\hat{B}(\gamma)|^2}
=
\sqrt{|A| |B|},
$$
we obtain the inequality $\|A * B\|_{\omega} \leqslant \sqrt{\alpha'\beta'}N$. Applying Corollary \ref{cor:wrapping} with $f = A * B$, $l_1 := \eta_1 |G|, l_2 := \eta_2 |G|$, and $\delta := \delta N$, we obtain the Supplementary Lemma.
\end{proof}

\subsection{Proof of the Main Lemma}
We can deduce the proof in a way similar to the one in the proof of Technical Lemma. However, this would involve tedious calculations, and we proceed with a cleaner one.

\begin{proof}
Without loss of generality, we assume that $(A_1 * \ldots * A_k)(0) = o(p^{k-1})$.

Let us fix some positive integer $d$ and rearrange frequences $\gamma_i$ such that $|\hat{A_1}(0)| > |\hat{A_1}(\gamma_1)| \geqslant |\hat{A_1}(\gamma_2)| \geqslant \ldots$. 
Clearly, since $\sum_{i}|\hat{A}_1(\gamma_i)|^2 = p|A| < p^2$, one has $|\hat{A}(\gamma_d)| < p/\sqrt{d}$ for any $d$.
Now write
$$
\#\{(a_1, \ldots, a_k) \in A_1 \times \ldots \times A_k: a_1 + \ldots + a_k = 0\} = 
\frac{1}{p}
\sum_{i = 1}^{d}\hat{A_1}(\gamma_i)\ldots \hat{A_k}(\gamma_i) 
+
\frac{1}{p}
\sum_{i > d}\hat{A_1}(\gamma_i)\ldots \hat{A_k}(\gamma_i).
$$
We estimate the second term as 
$
\frac{1}{p}\max_{i > d}|\hat{A_1}(\gamma_i)| \sum_{i}|\hat{A_2}(\gamma_i)|
\ldots|\hat{A_k}(\gamma_i)| 
\leqslant 
\frac{1}{p}\frac{p}{\sqrt{d}}p^{k-1} = p^{k-1}/\sqrt{d}.
$
Let us now introduce the parameter $\eps > 0$ such that $K := 1/\eps$ is a positive integer, and let split the unit circle $\{z : |z| = 1\}$ into $K$ equal arcs $\cS_1, \cS_2, \ldots, \cS_K$ such that $\cS_1$ is symmetric around $z=1$. Let us introduce the Bohr set: 
$$
\cB := \{x : \gamma_i(x) \in \cS_1, 1 \leqslant i \leqslant d\}.
$$
Clearly, for any $\gamma_i, 1 \leqslant i \leqslant d$, $ \frac{\hat{\cB}(\gamma_i)}{|\cB|} = 1 + O(\eps)$ holds.\\
Let $|\cB| = \theta p$. 
It is well-known, that Bohr set satisfies the inequality $\theta \geqslant \eps^d p$.\\
Now we introduce functions $f_1, \ldots, f_k: \F_p \rightarrow \C$ defined by
$$
f_i(x) := \frac{(A_i * \cB)(x)}{|\cB|} \text{ for } 1 \leqslant i \leqslant k.
$$

These functions are `smoothed' versions of sets $A_1, \ldots, A_k$:
$$
(f_1 * \ldots * f_k)(0) = 
\frac{1}{p}\sum_{i}
\hat{A_1}(\gamma_i)\ldots \hat{A_k}(\gamma_i) \frac{\hat{\cB}(\gamma_i)^k}{|\cB|^k} = 
\frac{1}{p}\sum_{i = 1}^{d}
\hat{A_1}(\gamma_i)\ldots \hat{A_k}(\gamma_i)
+ O\big(p^{k-1}/\sqrt{d}\big) 
+ O\big(\eps p^{k-1}\big).
$$
To make both error terms equally small, we set $\eps \sim 1/\sqrt{d}$, so $(f_1 * f_2 * \ldots * f_k)(0) \ll \eps p^{k-1}$.

Let us work with level-sets of functions $f_i$. Take some particular $f_i$.
Clearly, Wiener norm of the function $|\cB|f_i = A_i * \cB$ is at most $\sqrt{\alpha_i \theta}p$.
Let us choose some sufficiently small parameters $\chi_i, \xi_i$. 
We apply Corollary 1 with $\eta := \delta := \chi_i \alpha_i \theta p$, which gives an $\eps_i$-$d_i$-wrapper $W'_i$ and set $Y'_i$, such that all $x$ outside $Y'_i$ satisfy the following:
\begin{enumerate}
    \item if $f_i(x) > 2\chi_i\alpha_i$, then $x \in W'_i$,
    
    \item if $f_i(x) < \chi_i \alpha_i$, then $x \not \in W'_i$.
\end{enumerate}

Moreover,
$
\eps_i \gg \chi_i \sqrt{\alpha_i \theta} \ \ 
d_i \ll \frac{1}{\chi_i^2 \alpha_i \theta}
\log{\frac{1}{\xi_i}}, \ \ 
|Y'_i| \leqslant \xi_i p.
$ \\
Point (2) implies an inequality 
$
W'_i(x) \leqslant \frac{1}{\chi_i \alpha_i}f_i(x) + Y'_i(x)\ \ \forall x \in \F_p.
$
Therefore, 
$$
(W'_1 * \ldots * W'_k)(0) = 
\sum_{x_1 + \ldots + x_k = 0} W'_1(x_1) \ldots W'_k(x_k)
\leqslant
\sum_{x_1 + \ldots + x_k = 0} \prod_{i}\bigg( \frac{1}{\chi_i \alpha_i}f_i(x) + Y'_i(x)\bigg)
\leqslant
$$
$$
\leqslant 
\frac{(f_1 * \ldots * f_k)(0)}{\chi_1 \ldots \chi_k \alpha_1 \ldots \alpha_k} 
 + 
O(\ldots) 
\ll
\frac{\eps}{\chi_1 \ldots \chi_k \alpha_1 \ldots \alpha_k} p^{k-1}.
$$
We suppressed the second error term, since it is negligebly small due to choice of parameters $\xi_i$.\\
To make this quantity equal $o(p^{k-1})$, we need $d$ to dominate the value of $1/\chi_1 \ldots \chi_k \alpha_1 \ldots \alpha_k$. To make Wiener norms of sets $W'_i$ to be at most $p^{0.001}$, we need inequalities $d_i\log{1/\eps_i} \ll \log{p}$, or $(\chi_i)^{-2}\alpha_i^{-1}\theta^{-1}\log{\xi_i^{-1}} \ll \log{p}/\log\log{p}$. Since $\theta \gg \eps^d$, this would follow from $d \ll \log\log{p}/\log\log\log{p}$ and $\xi_i \gg 1/(\log{p})^{100}$.\\
Now let us construct particular $W_i, Y_i$, required by the Lemma's statement.\\
There exists $b_i \in \cB$, such that $f(a + b_i) \geqslant 2\chi_i \alpha_i$ for all but $2\chi_i |A_i|$ elements $a$ of $A_i$. Indeed,
$$
|\cB|\min_{b \in \cB} \#\{a \in A : f(a + b) < 2\chi_i \alpha_i\} \leqslant
\sum_{b \in \cB} \#\{a \in A : f(a + b) < 2\chi_i \alpha_i\} =
$$
$$
= 
\sum_{x: f(x) < 2\chi_i \alpha_i} (A * \cB)(x) 
<
p \cdot 2\chi_i \alpha_i |\cB| = 2\chi_i |A||\cB|
$$
Let us denote this particular $b$ as $b_i$, and denote by $Y''_i$ those $a \in A$ which do not satisfy  $f(a + b_i) \geqslant 2\chi_i \alpha_i$. Clearly, if $a \in A$ is not in $Y'_i$ and not in $Y''_i - b_i$, then $f(a + b_i) > 2\chi_i \alpha_i$, and point (1) implies $a + b_i \in W'_i$, or $a \in W'_i - b_i$.
Setting $Y_i := E'_i \cup (E''_i - b_i)$ and $W_i := W'_i - b_i$ gives the sets, required by the Main Lemma statement. \\
Point (1) of the Lemma statement is satisfied by the $(W_1 * \ldots * W_k)(b_1 + \ldots + b_k) = o(p^{k-1})$.\\
Point (2) is satisfied by $|Y_i| \ll \chi_i |A| + \xi_i p = o(p)$. \\
Point (3) is satisfied by the fact that $W_i$ has bounded dimension and granularity, which makes the argument of the Property \ref{property:wcapw} applicable. This completes the proof.
\end{proof}
{\it Remark.}
Inequalities, showing up in the course of the proof, show that one can make the statement lemma quantitative with densities of $\alpha_i$ being some small degrees of $1/\log\log{p}$. \\
If the Croot-Laba-Sisask result is applied towards the sumset $A_i * \cB$, one can improve a small degree of $1/\log\log{p}$ to some small degree of $1/\log{p}$. However, `asymmetric' results such as the Technical Lemma above appear to give better estimates in our particular applications, and therefore we do not prove a better quantitative symmetric result here.

\noindent{
A. S. Semchankau, \\
Steklov Mathematical Institute of Russian Academy of Sciences,\\
8 Gubkina St., Moscow 119991, Russia}\\
{\tt aliaksei.semchankau@gmail.com}

\end{document}